\documentclass[10pt, a4paper]{amsart}
\usepackage{geometry} 
\usepackage{url}
\usepackage{palatino}
\usepackage{amsthm}
\usepackage{amssymb}
\usepackage{latexsym}
\usepackage{amsfonts}
\usepackage{amsmath}
\usepackage{amstext}
\usepackage{accents}
\usepackage{cancel}
\usepackage{color}
\usepackage{soul}
\usepackage{enumerate}
\usepackage{eucal}
\usepackage{amscd}
\usepackage{hyperref}
\usepackage{graphicx}
\usepackage{verbatim}
\usepackage{caption}
\usepackage{enumitem}

\usepackage[usenames]{xcolor}
\setstcolor{red}

\definecolor{greenish}{RGB}{50,160,0}

\newtheorem{theorem}{Theorem}[section]

\theoremstyle{definition}
\newtheorem{definition}[theorem]{Definition}
\newtheorem{example}[theorem]{Example}

\theoremstyle{remark}
\newtheorem{remark}[theorem]{Remark}

\numberwithin{equation}{section}

\begin{document}
\title[The uniqueness of the Enneper surfaces and Chern-Ricci functions on 
minimal surfaces]{The uniqueness of the Enneper surfaces and \\ Chern--Ricci functions on 
minimal surfaces}

\author[Hojoo Lee]{Hojoo Lee}
\address{Center for Mathematical Challenges, Korea Institute for Advanced Study, Hoegiro 85, Dongdaemun-gu, Seoul 02455, Korea}
\email{momentmaplee@gmail.com}

\begin{abstract}
We construct the first and second Chern-Ricci functions on negatively curved minimal surfaces in ${\mathbb{R}}^{3}$ using Gauss curvature 
and angle functions, and establish that they become harmonic functions on the minimal surfaces. We prove that a minimal surface has constant first Chern-Ricci function if and only if it is Enneper's surface. We explicitly determine the moduli space of minimal surfaces having constant second Chern-Ricci function, which contains catenoids, helicoids, and their associate families. 
\end{abstract}

\keywords{Enneper's surface, holomorphic quadratic differentials, minimal surfaces, Ricci condition}
\subjclass{53A10, 49Q05}
 
 \maketitle

   \begin{center}
  {\small{\textit{To my mother who always gives me courage}}}
   \end{center}

 \bigskip

 \section{Enneper's surfaces and other minimal surfaces in ${\mathbb{R}}^{3}$}
  
  What is the Enneper surface? The Enneper-Weierstrass representation \cite{Oss86} says that a simply connected minimal surface in 
  ${\mathbb{R}}^{3}$ can be constructed by the conformal harmonic mapping 
  \[
  {\mathbf{X}}(\zeta) =   {\mathbf{X}}({\zeta}_{0}) + \left(  \textrm{Re} \int_{{\zeta}_{0}}^{\zeta}  {\phi}_{1}(\zeta)  d\zeta,  \textrm{Re} \int_{{\zeta}_{0}}^{\zeta}  {\phi}_{2}(\zeta)  d\zeta,  \textrm{Re}  \int_{{\zeta}_{0}}^{\zeta}   {\phi}_{3}(\zeta)  d\zeta \right), 
\]
where the  holomorphic null curve $\phi(\zeta)$ is determined by the Weierstrass data 
$(G(\zeta), \Psi(\zeta) d\zeta):$
\[
\phi(\zeta)= \left({\phi}_{1}(\zeta), {\phi}_{2}(\zeta), {\phi}_{3}(\zeta) \right)=  \left(  \frac{1}{2}  \left(1 -G^2 \right)  \Psi, \,  \frac{i}{2} \left(1 +G^2 \right)  \Psi, \,   G  \Psi  \right)
\] 
Taking the simplest data $(G(\zeta), \Psi(\zeta) d\zeta)= (\zeta, d\zeta)$, $\zeta= u+iv \in \mathbb{C}$ yields Enneper's surface
 \begin{equation} \label{enn patch}
  {\mathbf{X}}_{{}_{\textrm{\textbf{Enn}}}}(u, v)  = \left( \frac{1}{2} \left(  u -\frac{u^3}{3} +uv^2   \right),  
  \frac{1}{2} \left( - v + \frac{v^3}{3} -u^2 v \right),   \frac{1}{2} \left( u^2 -v^2 \right) \right).
 \end{equation}
One can see the \textit{inner} rotational symmetry of its induced metric ${\mathbf{g}}_{{}_{\textrm{\textbf{Enn}}}} =  {\left( \frac{1+ {\vert \zeta \vert}^2 }{2} \right)}^{2} {\vert d\zeta \vert}^{2}$ and strictly negative Gauss curvature ${\mathcal{K}}_{{}_{\textrm{\textbf{Enn}}}} = {\mathcal{K}}_{{\mathbf{g}}_{{}_{\textrm{\textbf{Enn}}}}}  = - { \left( \frac{2}{1+{\vert \zeta \vert}^{2}  } \right)}^{4}$. We observe that 
\begin{enumerate}
\item  the conformally changed metric 
${\left( -  {\mathcal{K}}_{{}_{\textrm{\textbf{Enn}}}} \right)}^{\frac{1}{2}}  {\mathbf{g}}_{{}_{\textrm{\textbf{Enn}}}}   =  {\vert d\zeta \vert}^{2}$
is flat, and that
\item 
${\left( -  {\mathcal{K}}_{{}_{\textrm{\textbf{Enn}}}} \right)}   {\mathbf{g}}_{{}_{\textrm{\textbf{Enn}}}}   =    { \left( \frac{2}{1+{\vert \zeta \vert}^{2}  } \right)}^{2}{\vert d\zeta \vert}^{2}$ becomes the metric on the complex $\zeta$-plane  induced by the stereographic projection of the unit sphere sitting in ${\mathbb{R}}^{3}$ with respect to the north pole.
\end{enumerate}
Do these  \textit{intrinsic} properties uniquely determine Enneper's surfaces among minimal surfaces? The answer is completely no. Ricci \cite{Bl1950, BM2013, Law1970, Law1971, MM2015} showed that, if $\Sigma$ is a minimal surface in  ${\mathbb{R}}^{3}$, on non-flat points, the metric ${\left( -  {\mathcal{K}}_{ {\mathbf{g}}_{{}_{\Sigma}}} \right)}^{\frac{1}{2}}  {\mathbf{g}}_{{}_{\Sigma}}$ is flat. The Ricci condition guarantees that the metric ${\left( -  {\mathcal{K}}_{ {\mathbf{g}}_{{}_{\Sigma}}} \right)}  {\mathbf{g}}_{{}_{\Sigma}}$ has   constant Gauss curvature 1. We see that Enneper's surface becomes the simplest example illustrating Ricci conditions for negatively curved minimal surfaces in  ${\mathbb{R}}^{3}$. 

\begin{figure}
 \centering
 \includegraphics[height=4.50cm]{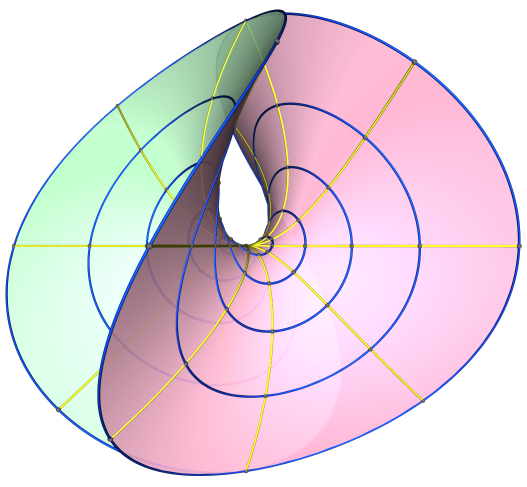}
 \caption{\small{An approximation of a part of Enneper's surface with total curvature $-4 \pi$ \; \cite{WebEnn}}} 
 \end{figure}
  We shall present the \textit{coordinate-free} characterization of the Enneper surface given by the patch (\ref{enn patch}).
To achieve this goal, we discover a geometric identity on Enneper's surface. Let's recall 
 
\begin{definition}[\textbf{Angle function on the oriented surface in ${\mathbb{R}}^{3}$}]
Given a surface  $\Sigma$ in ${\mathbb{R}}^{3}$ oriented by the unit normal vector field $\mathbf{N}$ and a constant unit vector field ${\mathbf{V}}(p)={\mathbf{V}}$ in ${\mathbb{R}}^{3}$, we introduce the angle function  $\mathbf{N}_{\mathbf{V}}:  \Sigma \to [-1, 1]$ by the formula
\[
\mathbf{N}_{\mathbf{V}} (p):= {\langle \mathbf{N}(p), {\mathbf{V}} \rangle}_{{\mathbb{R}}^{3}}, \quad p \in \Sigma,
\]
which is the normal component of the vector field $\mathbf{V}$.
\end{definition}
We observe that the induced unit normal vector field $\mathbf{N}_{{}_{\textrm{\textbf{Enn}}}}$ on the Enneper surface given by
\begin{center}
${\mathbf{X}} = {\mathbf{X}}_{{}_{\textrm{\textbf{Enn}}}}(u, v)  = \left( \frac{1}{2} \left(  u -\frac{u^3}{3} +uv^2   \right),  
  \frac{1}{2} \left( - v + \frac{v^3}{3} -u^2 v \right),   \frac{1}{2} \left( u^2 -v^2 \right) \right)$
\end{center}
  reads 
\begin{center}
$\mathbf{N}_{{}_{\textrm{\textbf{Enn}}}}(u, v) = \frac{1}{\Vert  {\mathbf{X}}_{u} \times {\mathbf{X}}_{v}  \Vert}  {\mathbf{X}}_{u} \times {\mathbf{X}}_{v}
 = \frac{1}{1+u^2 +v^2} \left( 2u, 2v, -1 +u^2 +v^2   \right)$.
\end{center}
Taking the downward vector $-{\mathbf{e}_{3}}=(0, 0, -1)$, we see that the following quantity is constant:
 \[
  { \left( -  \mathcal{K}  \right) }^{ - \frac{1}{4}}   \left(  1+ \mathbf{N}_{(-{\mathbf{e}_{3}})}  \right)  
  = \left( \frac{1+u^2 +v^2}{2} \right) \cdot  \left(  1+  \frac{1-u^2 -v^2}{ 1+u^2 +v^2} \right)  = 1.
 \]
 It turns out that the constancy of this quantity captures the geometric uniqueness of   Enneper's surfaces among  minimal surfaces 
 in ${\mathbb{R}}^{3}$, and that the geometric quantity ${ \left( -  \mathcal{K}  \right) }^{ - \frac{1}{4}}   \left(  1+ \mathbf{N}_{(-{\mathbf{e}_{3}})}  \right)$ motivates the first Chern-Ricci harmonic function on minimal surfaces.
 
\begin{theorem}[\textbf{The first Chern-Ricci harmonic function and uniqueness of Enneper's surfaces}]
Let  $\Sigma$ denote a minimal surface immersed in ${\mathbb{R}}^{3}$ with the Gauss curvature $\mathcal{K}={\mathcal{K}}_{ {\mathbf{g}}_{{}_{\Sigma}}}<0$, 
unit normal vector field $\mathbf{N}$, and  angle function $\mathbf{N}_{\mathbf{V}} \in (-1, 1]$ for a constant unit vector 
field ${\mathbf{V}}$ in ${\mathbb{R}}^{3}$. 
\begin{enumerate}
\item The first Chern-Ricci function ${\textrm{CR}}^{1}_{\mathbf{V}} := \ln \left(  { \left( -  \mathcal{K}  \right) }^{ - \frac{1}{4}}   \left(  1+ \mathbf{N}_{\mathbf{V}}  \right)    \right)$  is harmonic on $\Sigma:$
\begin{center}
       ${\triangle}_{ {\mathbf{g}}_{{}_{\Sigma}} } \,  \ln \left( \frac{1+ \mathbf{N}_{\mathbf{V}}  }{  {  \left( -  \mathcal{K}  \right) }^{\frac{1}{4}}  } \right) = 0.$ 
\end{center}
\item If the first Chern-Ricci function is constant, then it should be a part of the Enneper surface, up to  isometries and homotheties 
in ${\mathbb{R}}^{3}$. 
\end{enumerate}
\end{theorem}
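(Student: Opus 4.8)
The plan is to pass to the Enneper--Weierstrass representation and rewrite every quantity entering $\textrm{CR}^{1}_{\mathbf V}$ in terms of the Weierstrass data $(G,\Psi\,d\zeta)$, where $G$ is the stereographic Gauss map and $\Psi\,d\zeta$ the holomorphic height differential. First I would record the standard identities: the induced metric is conformal, $\mathbf g_{\Sigma}=\lambda^{2}\,|d\zeta|^{2}$ with $\lambda=\tfrac12|\Psi|\,(1+|G|^{2})$; the Gauss curvature is $(-\mathcal K)^{1/2}=4|G'|/\bigl(|\Psi|(1+|G|^{2})^{2}\bigr)$, so $(-\mathcal K)^{-1/4}=|\Psi|^{1/2}(1+|G|^{2})/(2|G'|^{1/2})$; and $\mathbf N=(1+|G|^{2})^{-1}\bigl(2\,\textrm{Re}\,G,\,2\,\textrm{Im}\,G,\,|G|^{2}-1\bigr)$. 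A direct expansion, which I would carry out, then yields the angle identity
\[ 1+\mathbf N_{\mathbf V}=\frac{2\,|G-b|^{2}}{(1+|b|^{2})(1+|G|^{2})}, \]
where $b\in\mathbb C\cup\{\infty\}$ is the stereographic image of $-\mathbf V$ (with the limiting form $1+\mathbf N_{\mathbf V}=2/(1+|G|^{2})$ when $\mathbf V=-\mathbf e_{3}$, $b=\infty$). Multiplying the last two displays produces the crucial factorization in which the factor $(1+|G|^{2})$ cancels:
\[ (-\mathcal K)^{-1/4}\,(1+\mathbf N_{\mathbf V})=\frac{|\Psi|^{1/2}\,|G-b|^{2}}{(1+|b|^{2})\,|G'|^{1/2}}. \]

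For part (1) I would take logarithms to obtain
\[ \textrm{CR}^{1}_{\mathbf V}=\tfrac12\ln|\Psi|+2\ln|G-b|-\tfrac12\ln|G'|-\ln(1+|b|^{2}). \]
Since $\mathbf g_{\Sigma}$ is conformal to $|d\zeta|^{2}$, a function is $\triangle_{\mathbf g_{\Sigma}}$-harmonic exactly when it is harmonic for the flat Laplacian $\partial_{u}^{2}+\partial_{v}^{2}$. The last summand is constant, and each of the first three is the logarithm of the modulus of a holomorphic function, hence the real part of a locally defined holomorphic logarithm and therefore flat-harmonic away from the zeros of $\Psi$, $G-b$ and $G'$. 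The hypotheses remove precisely those zeros: an immersion forces $\Psi\neq0$; the assumption $\mathcal K<0$ forces $G'\neq0$ (as $\mathcal K$ vanishes exactly where $G'$ does); and $\mathbf N_{\mathbf V}\in(-1,1]$ forces $G\neq b$. Hence $\textrm{CR}^{1}_{\mathbf V}$ is harmonic on all of $\Sigma$.

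For part (2) I would first normalize by a rotation of $\mathbb R^{3}$ so that $\mathbf V=-\mathbf e_{3}$: because $\mathcal K$ is intrinsic and $\langle R\mathbf N,\mathbf V\rangle=\langle\mathbf N,R^{-1}\mathbf V\rangle$, such a rotation only relabels the constant vector and keeps $\textrm{CR}^{1}$ constant, while the conclusion ``is Enneper up to isometry'' is preserved. With $\mathbf V=-\mathbf e_{3}$ the factorization collapses to $\textrm{CR}^{1}=\tfrac12\ln|\Psi/G'|$, so constancy says that the holomorphic function $\Psi/G'$ has constant modulus on the connected surface; by the maximum modulus principle it is a nonzero constant $\kappa$, that is, $\Psi\,d\zeta=\kappa\,dG$. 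Using $G$ itself as a holomorphic coordinate (legitimate since $G'\neq0$) converts the Weierstrass data into $(\,\zeta,\ \kappa\,d\zeta\,)$, which is Enneper's data up to the scalar $\kappa$.

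It then remains to absorb $\kappa=\rho\,e^{i\psi}$. A homothety by $\rho^{-1}$ rescales the differential to $(\zeta,e^{i\psi}d\zeta)$, and I would kill the phase by combining an axial rotation with a domain rotation: under a rotation by $\gamma$ about the $\mathbf e_{3}$-axis the data transforms as $(G,\Psi\,d\zeta)\mapsto(e^{i\gamma}G,\,e^{-i\gamma}\Psi\,d\zeta)$, and the subsequent reparametrization $\eta=e^{i\gamma}\zeta$ sends $(\zeta,e^{i\psi}d\zeta)$ to $(\eta,e^{i(\psi-2\gamma)}d\eta)$; choosing $\gamma=\psi/2$ recovers Enneper's data $(\eta,d\eta)$ exactly. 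I expect the main obstacle to lie precisely in this reconstruction: one must keep careful track of the three independent symmetries (conformal reparametrization of the domain, rigid motions of $\mathbb R^{3}$, and homotheties) and verify that they act on $(G,\Psi\,d\zeta)$ as claimed, so that the single complex relation $\Psi\,d\zeta=\kappa\,dG$ --- which a priori fixes the surface only up to these symmetries --- really singles out Enneper's surface and nothing else. Deriving the axial transformation law $(G,\Psi)\mapsto(e^{i\gamma}G,e^{-i\gamma}\Psi)$, most cleanly via $\phi_{1}-i\phi_{2}=\Psi$, is the technical heart of this final step.
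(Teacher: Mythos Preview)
Your argument is correct, but part (1) takes a genuinely different route from the paper. The paper does not compute in Weierstrass data at all for the harmonicity: it simply adds two classical identities, Chern's $\triangle_{\mathbf g_\Sigma}\ln(1+\mathbf N_{\mathbf V})=\mathcal K$ and Ricci's $\triangle_{\mathbf g_\Sigma}\ln\bigl((-\mathcal K)^{-1/4}\bigr)=-\mathcal K$, so the sum vanishes. Your approach instead derives the explicit factorization $(-\mathcal K)^{-1/4}(1+\mathbf N_{\mathbf V})=|\Psi|^{1/2}|G-b|^{2}/\bigl((1+|b|^{2})|G'|^{1/2}\bigr)$ and reads off harmonicity from the fact that each factor is the modulus of a nonvanishing holomorphic function. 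Your route is self-contained and produces a formula you can reuse, while the paper's route is shorter and isolates the two geometric ingredients (Chern, Ricci) behind the result.

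For part (2) the two proofs are essentially the same, only with the coordinate change placed differently. The paper first passes to isothermic coordinates normalized so that $\Psi=1/\mathbf g'$, which makes $\textrm{CR}^{1}=-\textrm{Re}\,\log\mathbf g'$ immediately; constancy then forces $\mathbf g$ linear. You keep $(G,\Psi)$ general, obtain $\textrm{CR}^{1}=\tfrac12\ln|\Psi/G'|$, deduce $\Psi\,d\zeta=\kappa\,dG$, and then switch to $G$ as coordinate---which is exactly the paper's isothermic choice. The paper stops as soon as the data is seen to be that of Enneper, whereas you go further and carefully absorb the constant $\kappa$ via homothety and an axial rotation; that extra care is fine and fills in a step the paper leaves implicit.
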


 \begin{theorem}[\textbf{Harmonicity of the second Chern-Ricci function and classification of minimal surfaces with constant second Chern-Ricci function}]
Let  $\Sigma$ denote a minimal surface in ${\mathbb{R}}^{3}$ with the Gauss curvature $\mathcal{K}={\mathcal{K}}_{ {\mathbf{g}}_{{}_{\Sigma}}}<0$, 
unit normal vector $\mathbf{N}$, and angle function $\mathbf{N}_{\mathbf{V}} \in (-1, 1)$ for a constant unit vector 
field ${\mathbf{V}}$ in ${\mathbb{R}}^{3}$.
\begin{enumerate}
\item The second Chern-Ricci function ${\textrm{CR}}^{2}_{\mathbf{V}} :=\ln \left(  { \left( -  \mathcal{K}  \right) }^{ - \frac{1}{2}}   \left(  1 -  { \mathbf{N}_{\mathbf{V}} }^2 \right)    \right)$  is harmonic on $\Sigma:$
\begin{center}
  ${\triangle}_{ {\mathbf{g}}_{{}_{\Sigma}} } \,   \ln \left( \frac{1-{ \mathbf{N}_{\mathbf{V}} }^2 }{  {  \left( -  \mathcal{K}  \right) }^{\frac{1}{2}}  } \right) = 0.$ 
\end{center}
\item Any member of the moduli space of minimal surfaces of constant second Chern-Ricci function is a part of the minimal surface given by Weierstrass data $\left({\mathbf{g}}(\zeta), \frac{1}{{\mathbf{g}}'(\zeta)} d\zeta\right)=\left( e^{\alpha \left(\zeta - {\zeta}_{0} \right)},  e^{-\alpha \zeta}  d \zeta \right)$ for some constants $\alpha \in {\mathbb{C}}-\{0\}$ and ${\zeta}_{0} \in {\mathbb{C}}$. The moduli space contains catenoids, helicoids, and their associate families. 
\end{enumerate}
\end{theorem}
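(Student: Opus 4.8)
The plan is to run everything through the Enneper--Weierstrass representation recalled at the start. Since the whole setup is covariant under rotations of $\mathbb{R}^3$ — a rotation acts on the Gauss map $G$ by a Möbius transformation of $\CP^{1}$ and leaves both $\mathcal{K}$ and the angle function geometric — I would first normalize $\mathbf{V}=-\mathbf{e}_3$ without loss of generality. With Weierstrass data $(G,\Psi\,d\zeta)$ the induced metric is $\mathbf{g}_\Sigma=\tfrac14|\Psi|^2(1+|G|^2)^2|d\zeta|^2$, the unit normal is $\mathbf{N}=\frac{1}{1+|G|^2}\left(2\,\mathrm{Re}\,G,\,2\,\mathrm{Im}\,G,\,|G|^2-1\right)$, and the Gauss curvature satisfies $-\mathcal{K}=\left(\frac{4|G'|}{|\Psi|(1+|G|^2)^2}\right)^2$. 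For $\mathbf{V}=-\mathbf{e}_3$ this gives $\mathbf{N}_{\mathbf{V}}=\frac{1-|G|^2}{1+|G|^2}$, so that $1-\mathbf{N}_{\mathbf{V}}^2=\frac{4|G|^2}{(1+|G|^2)^2}$.

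Substituting these into the definition, the conformal factors $(1+|G|^2)^2$ cancel and I expect the clean identity
\[
\textrm{CR}^2_{\mathbf{V}}=\ln\!\left((-\mathcal{K})^{-\frac12}\left(1-\mathbf{N}_{\mathbf{V}}^2\right)\right)=\ln\frac{|\Psi|\,|G|^2}{|G'|}.
\]
This is the linchpin. For part (1), the hypotheses $\mathcal{K}<0$ and $\mathbf{N}_{\mathbf{V}}\in(-1,1)$ guarantee $G'\neq0$, $G\neq0,\infty$, and $\Psi\neq0$, so each of $\ln|\Psi|$, $\ln|G|$, $\ln|G'|$ is the real part of a locally defined holomorphic logarithm and hence Euclidean-harmonic; writing $\textrm{CR}^2_{\mathbf{V}}=\mathrm{Re}\left(\log\Psi+2\log G-\log G'\right)$ and invoking the conformal invariance of the Laplacian in two dimensions upgrades this to $\triangle_{\mathbf{g}_\Sigma}\textrm{CR}^2_{\mathbf{V}}=0$.

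For part (2), constancy of $\textrm{CR}^2_{\mathbf{V}}$ means $\left|\frac{\Psi G^2}{G'}\right|$ is a positive constant. Since $\frac{\Psi G^2}{G'}$ is holomorphic and non-vanishing, a holomorphic function of constant modulus must be constant, so $\frac{\Psi G^2}{G'}=\mu$ for some $\mu\in\mathbb{C}-\{0\}$; equivalently $\Psi\,d\zeta=\mu\frac{G'}{G^2}\,d\zeta=-\mu\,d\!\left(\frac1G\right)$. I would then reparametrize by $\tau=\log G$, a legitimate holomorphic coordinate on any simply connected non-flat piece because $G'\neq0$ and $G\neq0$; in this coordinate $G=e^\tau$ and $\Psi\,d\zeta=\mu e^{-\tau}\,d\tau$, putting the data in the normal form $(e^\tau,\mu e^{-\tau}d\tau)$. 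Undoing the scaling by a homothety and reading the stated description with parameters $\alpha,\zeta_0$ as the coordinate change $\tau\mapsto\alpha(\zeta-\zeta_0)$ recovers exactly $(e^{\alpha(\zeta-\zeta_0)},e^{-\alpha\zeta}d\zeta)$ up to isometry and homothety. Finally I would identify $\mu$ real with the catenoid, $\mu$ purely imaginary with the helicoid, and intermediate $\arg\mu$ with their associate family, since rotating the one-form by a phase is precisely the associate-family deformation.

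The main obstacle is not any single computation but the rigidity step together with its geometric bookkeeping: one must be sure that $\frac{\Psi G^2}{G'}$ is genuinely holomorphic and non-vanishing under the stated open conditions — this is where $\mathbf{N}_{\mathbf{V}}\in(-1,1)$ strictly, excluding the poles of the Gauss sphere at $\pm\mathbf{V}$, is essential — and that the logarithmic reparametrization is only valid locally, which is why the classification is stated for \emph{a part of} the surface rather than globally. The global monodromy of $\log G$ is exactly what distinguishes the periodic catenoid and helicoid quotients living inside the family.
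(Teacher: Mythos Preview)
Your argument is correct. The route differs from the paper's in two places worth noting. For part~(1), the paper does not compute in Weierstrass data at all: it invokes Chern's identity $\mathcal{K}=\triangle_{\mathbf{g}_\Sigma}\ln(1+\mathbf{N}_{\mathbf{V}})$ and Ricci's identity $4\mathcal{K}=\triangle_{\mathbf{g}_\Sigma}\ln(-\mathcal{K})$ to see that the \emph{first} Chern--Ricci function $\textrm{CR}^1_{\mathbf{V}}=\ln\bigl((1+\mathbf{N}_{\mathbf{V}})(-\mathcal{K})^{-1/4}\bigr)$ is harmonic, and then observes $\textrm{CR}^2_{\mathbf{V}}=\textrm{CR}^1_{\mathbf{V}}+\textrm{CR}^1_{-\mathbf{V}}$. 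Your direct computation $\textrm{CR}^2_{\mathbf{V}}=\mathrm{Re}\bigl(\log\Psi+2\log G-\log G'\bigr)$ is more hands-on but entirely self-contained, and it also makes transparent why the strict inequalities $\mathbf{N}_{\mathbf{V}}\in(-1,1)$ and $\mathcal{K}<0$ are needed. For part~(2), the paper first passes to the isothermic coordinate in which the Hopf differential is $-d\zeta^2$, so that $\Psi=1/\mathbf{g}'$ is already normalized and the constancy condition reduces immediately to $\vert\mathbf{g}/\mathbf{g}'\vert=\textrm{const}$, giving $\mathbf{g}=e^{\alpha(\zeta-\zeta_0)}$ with no residual constant $\mu$ to absorb. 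You instead keep a general conformal coordinate, obtain $\Psi G^2/G'=\mu$, and reparametrize by $\tau=\log G$ at the end; this is the same rigidity step, just with the coordinate normalization deferred. The paper's ordering is a bit cleaner for matching the stated normal form, while yours makes the role of the associate-family phase $\arg\mu$ more visible.
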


\begin{example}[\textbf{The second Chern-Ricci function on helicoids}]  Taking the Weierstrass data $(G(\zeta), \Psi(\zeta) d\zeta)= \left(e^{\zeta}, -i e^{-\zeta}d\zeta\right)$, $\zeta= u+iv \in \mathbb{C}$ gives ${\mathbf{X}} (u, v)= \left(- \sinh u \sin v, \sinh u \cos v, v \right)$. The second Chern-Ricci function with respect to the vector field $\mathbf{V}={\mathbf{e}_{3}}=(0, 0, 1)$ is constant:
\begin{center}
${\textrm{CR}}^{2}_{{\mathbf{e}_{3}}} =\ln \left(  { \left( -  \mathcal{K}  \right) }^{ - \frac{1}{2}}   \left(  1 -  { \mathbf{N}_{{\mathbf{e}_{3}}} }^2 \right)    \right)
= \ln \left(     {\left( \frac{  1 + {\vert e^{\zeta}\vert}^{2} }{  2 \,  \vert e^{\zeta} \vert }  \right) }^{2} \cdot  {\left( \frac{  2 \, \vert e^{\zeta} \vert }{  1 + {\vert e^{\zeta}\vert}^{2} }  \right) }^{2}  \right)= \ln 1 = 0.$
\end{center}
\end{example}

\begin{remark}
It would be interesting to generalize the Chern-Ricci harmonic functions on minimal hypersurfaces in higher dimensional Euclidean space. 
\end{remark}
 
 \section{Construction of the first and second Chern-Ricci harmonic functions}
 
 It is a classical fact that the flat points of a minimal surface are isolated, unless itself is flat everywhere. We use the geometric identities due to Chern
 and Ricci to construct harmonic functions on negatively curved minimal surfaces.
 
 \begin{theorem}[\textbf{Harmonicity of Chern-Ricci functions on minimal surfaces in ${\mathbb{R}}^{3}$}]
Let  $\Sigma$ denote a minimal surface immersed in ${\mathbb{R}}^{3}$ with the Gauss curvature $\mathcal{K}={\mathcal{K}}_{ {\mathbf{g}}_{{}_{\Sigma}}}<0$, unit normal vector field $\mathbf{N}$, and  angle function $\mathbf{N}_{\mathbf{V}} \in [-1, 1]$
 with respect to a constant unit vector field ${\mathbf{V}}$ in ${\mathbb{R}}^{3}$. 
\begin{enumerate}
\item When $-1< \mathbf{N}_{\mathbf{V}} \leq 1$, the first Chern-Ricci function ${\textrm{CR}}^{1}_{\mathbf{V}} =\ln \left( \frac{1+ \mathbf{N}_{\mathbf{V}}  }{  {  \left( -  \mathcal{K}  \right) }^{\frac{1}{4}}  } \right)$  is harmonic on $\Sigma$.
\item   When $-1<\mathbf{N}_{\mathbf{V}} <1$, the second Chern-Ricci function ${\textrm{CR}}^{2}_{\mathbf{V}} = \ln \left( \frac{1-{ \mathbf{N}_{\mathbf{V}} }^2 }{  {  \left( -  \mathcal{K}  \right) }^{\frac{1}{2}}  } \right) $  is harmonic on $\Sigma$.
\end{enumerate}
\end{theorem}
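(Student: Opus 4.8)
The plan is to pass to the Enneper--Weierstrass representation and exploit the fact that, in an isothermal coordinate $\zeta = u + iv$, the Laplace--Beltrami operator of a conformal metric $\mathbf{g} = \lambda^{2}|d\zeta|^{2}$ is just a positive multiple of the flat Laplacian, $\triangle_{\mathbf{g}} = \lambda^{-2}(\partial_{u}^{2} + \partial_{v}^{2}) = 4\lambda^{-2}\partial_{\zeta}\partial_{\bar{\zeta}}$. Consequently a function is $\mathbf{g}$-harmonic if and only if it is harmonic for the background Euclidean metric $|d\zeta|^{2}$, so the conformal factor $\lambda$ plays no role and it suffices to check $\partial_{\zeta}\partial_{\bar{\zeta}}\,\textrm{CR}^{i}_{\mathbf{V}} = 0$. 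The guiding principle is then that $\ln|f| = \mathrm{Re}(\log f)$ is harmonic whenever $f$ is holomorphic and nonvanishing, and I will manipulate each Chern--Ricci function into a linear combination of such terms.

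First I would record the two classical ingredients in Weierstrass data $(G,\Psi\,d\zeta)$: the induced metric has conformal factor $\lambda = \tfrac{1}{2}(1+|G|^{2})|\Psi|$, and the Gauss curvature satisfies $-\mathcal{K} = \bigl(4|G'|/((1+|G|^{2})^{2}|\Psi|)\bigr)^{2}$, the latter obtained from a direct computation of $\partial_{\zeta}\partial_{\bar{\zeta}}\ln\lambda$ in which the harmonic piece $\ln|\Psi|$ drops out and only $\ln(1+|G|^{2})$ survives. The second, and more delicate, ingredient is the algebraic shape of the angle function. Since the unit normal is the inverse stereographic image of $G$, namely $\mathbf{N} = (1+|G|^{2})^{-1}(2\,\mathrm{Re}\,G,\ 2\,\mathrm{Im}\,G,\ |G|^{2}-1)$, an ambient rotation $R\in \mathrm{SO}(3)$ carrying $\mathbf{V}$ to $\mathbf{e}_{3}$ acts on $G$ by a unitary Möbius transformation and leaves both $\mathcal{K}$ and $\triangle_{\mathbf{g}}$ invariant; hence I may assume $\mathbf{V} = \mathbf{e}_{3}$. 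The key identity I would establish is that for any unit $\mathbf{V}=(a,b,c)$ there exist holomorphic functions $H_{\pm}$, linear in $G$, with $1 \pm \mathbf{N}_{\mathbf{V}} = 2|H_{\pm}|^{2}/(1+|G|^{2})$; equivalently, the numerator $(1\pm c)|G|^{2} \pm 2a\,\mathrm{Re}\,G \pm 2b\,\mathrm{Im}\,G + (1\mp c)$ is a perfect modulus square $|pG+q|^{2}$, a factorization forced by the constraint $a^{2}+b^{2}+c^{2}=1$.

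With these in hand the computation collapses. Substituting into $\textrm{CR}^{1}_{\mathbf{V}} = \ln(1+\mathbf{N}_{\mathbf{V}}) - \tfrac{1}{4}\ln(-\mathcal{K})$ produces $\ln 2 + 2\ln|H_{+}| - \ln(1+|G|^{2})$ from the first term and $\tfrac{1}{2}\ln(4|G'|) - \ln(1+|G|^{2}) - \tfrac{1}{2}\ln|\Psi|$ from the second; the two copies of $\ln(1+|G|^{2})$ cancel, leaving $\textrm{CR}^{1}_{\mathbf{V}} = 2\ln|H_{+}| - \tfrac{1}{2}\ln|G'| + \tfrac{1}{2}\ln|\Psi| + \mathrm{const}$, a linear combination of logarithms of holomorphic functions, hence harmonic. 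The same bookkeeping applied to $\textrm{CR}^{2}_{\mathbf{V}} = \ln\bigl((1+\mathbf{N}_{\mathbf{V}})(1-\mathbf{N}_{\mathbf{V}})\bigr) - \tfrac{1}{2}\ln(-\mathcal{K})$ again cancels the $\ln(1+|G|^{2})$ terms and yields $2\ln|H_{+}| + 2\ln|H_{-}| - \ln|G'| + \ln|\Psi| + \mathrm{const}$, again harmonic. The main obstacle I anticipate is not the harmonicity argument itself but the careful verification of the perfect-square identity for the angle function together with the attendant domain issues: the hypotheses $\mathbf{N}_{\mathbf{V}} > -1$ (respectively $|\mathbf{N}_{\mathbf{V}}| < 1$) are exactly what guarantee $H_{+}\neq 0$ (respectively $H_{\pm}\neq 0$), while $\mathcal{K}<0$ guarantees $G'\neq 0$ and $\Psi\neq 0$, so that every logarithm above is of a nonvanishing holomorphic function and the cancellation is valid pointwise away from the isolated flat points.
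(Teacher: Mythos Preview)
Your argument is correct but takes a genuinely different route from the paper's short proof. The paper simply invokes two classical Laplacian identities as black boxes---Chern's identity $\triangle_{\mathbf{g}_\Sigma}\ln(1+\mathbf{N}_{\mathbf{V}})=\mathcal{K}$ and Ricci's identity $\triangle_{\mathbf{g}_\Sigma}\ln(-\mathcal{K})=4\mathcal{K}$---so that $\triangle_{\mathbf{g}_\Sigma}\textrm{CR}^{1}_{\mathbf{V}}=\mathcal{K}-\mathcal{K}=0$; part (2) then follows from the observation $\textrm{CR}^{2}_{\mathbf{V}}=\textrm{CR}^{1}_{\mathbf{V}}+\textrm{CR}^{1}_{-\mathbf{V}}$ via $\mathbf{N}_{-\mathbf{V}}=-\mathbf{N}_{\mathbf{V}}$. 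Your Weierstrass computation is the holomorphic unwinding of the same cancellation: the only non-harmonic contribution on either side is $\ln(1+|G|^{2})$, and the coefficients are arranged to match. What your approach buys is self-containment (you never need to cite Chern or Ricci) and an explicit expression of each $\textrm{CR}^{i}$ as $\mathrm{Re}\,\log(\text{holomorphic})$ up to a constant, which is exactly the form the paper later derives in its classification proofs and Appendix to read off constancy conditions; the paper's approach buys brevity and a coordinate-free presentation. Your perfect-square factorization $1\pm\mathbf{N}_{\mathbf{V}}=2|H_{\pm}|^{2}/(1+|G|^{2})$ for general $\mathbf{V}$ is correct and nicely handles the domain issues, though the rotation to $\mathbf{V}=\mathbf{e}_{3}$ that you already sketch renders it optional.
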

  
\begin{proof} 
The key idea is to combine two intriguing identities for Gauss curvature function on negatively curved minimal surfaces in ${\mathbb{R}}^{3}$. On the one 
hand, in his simple proof of Bernstein's Theorem for entire minimal graphs, Chern \cite[Section 4]{Chern1969} used the geometric identity
\begin{center}
       $\mathcal{K} =  {\triangle}_{ {\mathbf{g}}_{{}_{\Sigma}} } \,  \ln \left( 1+ \mathbf{N}_{\mathbf{V}}  \right).$ 
\end{center}
On the other hand, Ricci  \cite{Bl1950, BM2013, Law1970, Law1971, MM2015} obtained the geometric identity 
\begin{center}
       $4  \mathcal{K} =  {\triangle}_{ {\mathbf{g}}_{{}_{\Sigma}} } \,  \ln \left( -   \mathcal{K}  \right), \quad 
  \text{or equivalently,} \quad - \mathcal{K} =  {\triangle}_{ {\mathbf{g}}_{{}_{\Sigma}} } \,  \ln \left(   \frac{1}{ {\left( -   \mathcal{K} \right)}^{\frac{1}{4}}} \right).$
   \label{ricci identity}
\end{center}
Chern's identity and Ricci's identity imply the harmonicity of the first Chern-Ricci function: 
\begin{center}
       ${\triangle}_{ {\mathbf{g}}_{{}_{\Sigma}} } \, {\textrm{CR}}^{1}_{\mathbf{V}} =    {\triangle}_{ {\mathbf{g}}_{{}_{\Sigma}} } \,  \ln \left( \frac{1+ \mathbf{N}_{\mathbf{V}}  }{  {  \left( -  \mathcal{K}  \right) }^{\frac{1}{4}}  } \right) = {\triangle}_{ {\mathbf{g}}_{{}_{\Sigma}} } \,  \ln \left( 1+ \mathbf{N}_{\mathbf{V}}  \right) + {\triangle}_{ {\mathbf{g}}_{{}_{\Sigma}} } \,  \ln \left(   \frac{1}{ {\left( -   \mathcal{K} \right)}^{\frac{1}{4}}} \right)= 0.$ 
\end{center}
The identity $ -  \mathbf{N}_{\mathbf{V}} =  \mathbf{N}_{(\mathbf{-V})}$ and the linear combination of two first Chern-Ricci functions imply
\begin{center}
       ${\triangle}_{ {\mathbf{g}}_{{}_{\Sigma}} } \, {\textrm{CR}}^{2}_{\mathbf{V}} =    {\triangle}_{ {\mathbf{g}}_{{}_{\Sigma}} } \,  
      \ln \left( \frac{1-{ \mathbf{N}_{\mathbf{V}} }^2 }{  {  \left( -  \mathcal{K}  \right) }^{\frac{1}{2}}  } \right)  
       =    {\triangle}_{ {\mathbf{g}}_{{}_{\Sigma}} } \,  \ln \left( \frac{1+ \mathbf{N}_{\mathbf{V}}  }{  {  \left( -  \mathcal{K}  \right) }^{\frac{1}{4}}  } \right)
       +  {\triangle}_{ {\mathbf{g}}_{{}_{\Sigma}} } \,  \ln \left( \frac{1+ \mathbf{N}_{(\mathbf{-V})}  }{  {  \left( -  \mathcal{K}  \right) }^{\frac{1}{4}}  } \right) 
       =  0.$
\end{center}
 \end{proof}

 \section{Classifications of minimal surfaces with constant Chern-Ricci functions}
 
 \begin{theorem}[\textbf{Uniqueness of Enneper's minimal surfaces}]   \label{enneper is unique}
 Let  $\Sigma$ be a minimal surface in ${\mathbb{R}}^{3}$ with the Gauss curvature $\mathcal{K}={\mathcal{K}}_{ {\mathbf{g}}_{{}_{\Sigma}}}<0$ and
unit normal vector $\mathbf{N}$. Suppose that there exists a constant unit vector field ${\mathbf{V}}$ in ${\mathbb{R}}^{3}$ such that $-1<\mathbf{N}_{\mathbf{V}} \leq 1$ and that the first Chern-Ricci function ${\textrm{CR}}^{1}_{\mathbf{V}} =\ln \left( \frac{1+ \mathbf{N}_{\mathbf{V}}  }{  {  \left( -  \mathcal{K}  \right) }^{\frac{1}{4}}  } \right)$ is constant. Then, the minimal surface $\Sigma$ should be a part of Enneper's surface.
 \end{theorem}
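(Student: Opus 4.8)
The plan is to realize $\Sigma$ through its Enneper--Weierstrass data and to translate the constancy of $\textrm{CR}^{1}_{\mathbf{V}}$ into an algebraic constraint on that data. Since the conclusion is asserted only up to isometries of $\mathbb{R}^3$, I first apply an ambient rotation so that $\mathbf{V}=-\mathbf{e}_{3}$; this changes neither $\mathcal{K}$ nor minimality, and it converts the hypothesis $\mathbf{N}_{\mathbf{V}}>-1$ into the statement that the Gauss map $G$ omits the value $\infty$, i.e. $G$ is holomorphic on the parameter domain. Using the Weierstrass normal $\mathbf{N}=\frac{1}{1+|G|^2}(2\,\textrm{Re}\,G,\,2\,\textrm{Im}\,G,\,|G|^2-1)$ one finds $1+\mathbf{N}_{\mathbf{V}}=\frac{2}{1+|G|^2}$, and from $\mathcal{K}=-\big(\frac{4|G'|}{(1+|G|^2)^2|\Psi|}\big)^2$ (which reproduces the stated $\mathcal{K}_{\textrm{Enn}}$ for $G=\zeta$, $\Psi=1$) one reads off $(-\mathcal{K})^{1/4}=\frac{2|G'|^{1/2}}{(1+|G|^2)|\Psi|^{1/2}}$. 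Note that $\mathcal{K}<0$ forces $G'\neq 0$ throughout.

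Next I would substitute these two expressions into $\textrm{CR}^{1}_{\mathbf{V}}=\text{const}$. The factors $1+|G|^2$ cancel cleanly and the identity collapses to $(|\Psi|/|G'|)^{1/2}=c$ for a positive constant $c$, that is $|\Psi|=c^2|G'|$. Here is the decisive point: $\Psi$ and $G'$ are both holomorphic and $G'$ is nowhere zero, so $\Psi/G'$ is a holomorphic function whose modulus equals the constant $c^2$; by the open mapping theorem such a function is itself constant. Hence $\Psi=\lambda G'$ for some constant $\lambda\in\mathbb{C}\setminus\{0\}$ with $|\lambda|=c^2$.

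I then pass to the geometric Gauss-map coordinate $w=G(\zeta)$, which is a local biholomorphism because $G'\neq 0$; this is exactly why the theorem asserts only that $\Sigma$ is a \emph{part} of Enneper's surface. Under this substitution $\Psi\,d\zeta=\lambda G'\,d\zeta=\lambda\,dw$, so the Weierstrass data becomes $(w,\lambda\,dw)$ — precisely Enneper's data $(w,dw)$ multiplied by the constant $\lambda$. Writing $\lambda=|\lambda|e^{i\theta}$, the factor $|\lambda|$ is a homothety, while the phase $e^{i\theta}$ I remove by an ambient rotation about the $\mathbf{V}$-axis, which acts on the data by $G\mapsto e^{i\gamma}G$, $\Psi\mapsto e^{-i\gamma}\Psi$; in the renormalized Gauss-map coordinate this multiplies the coefficient of $dw$ by $e^{-2i\gamma}$, so choosing $\gamma=\tfrac12\theta$ makes it real and positive. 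After these reductions the data is $(w,|\lambda|\,dw)$, a homothety of Enneper, completing the identification.

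I expect the main obstacle to be the bookkeeping in the last paragraph: showing that the phase of $\lambda$ contributes only a rigid motion, i.e. that Enneper's surface is congruent to every member of its own associate family. Concretely this is the verification that $e^{i\theta}\phi(\zeta)$ equals $R_{\mathbf{V}}\,\phi$ after the reparametrization $\zeta\mapsto e^{i\theta/2}\zeta$, where $R_{\mathbf{V}}$ is the rotation about the $\mathbf{V}$-axis by $-\theta/2$; this self-associate property is what upgrades ``the data is $\lambda$ times Enneper's'' to ``the surface is Enneper up to isometry and homothety.'' The remaining care points — that the constant-modulus argument is applied on the open non-flat locus (flat points being isolated) and that $c>0$ so the logarithm makes sense — are routine.
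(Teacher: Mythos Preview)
Your proof is correct and follows the same overall strategy as the paper's: rotate $\mathbf{V}$ to $-\mathbf{e}_3$, express the Chern--Ricci function through the Weierstrass data, and use holomorphicity to pin down the data as Enneper's. The execution differs in one point worth noting. The paper first passes to \emph{isothermic} coordinates $\zeta$ (curvature-line coordinates normalizing the Hopf differential to $-\,d\zeta^2$), in which the data automatically takes the form $\bigl(\mathbf{g}(\zeta),\tfrac{1}{\mathbf{g}'(\zeta)}\,d\zeta\bigr)$; the Chern--Ricci function then collapses to $-\,\mathrm{Re}\,\log\mathbf{g}'(\zeta)$, so constancy forces $\mathbf{g}'\equiv\alpha$ in one stroke. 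You instead keep general data $(G,\Psi\,d\zeta)$, derive $|\Psi/G'|=\text{const}$, and invoke the open mapping theorem to conclude $\Psi=\lambda G'$ before reparametrizing by $w=G$. The paper's coordinate normalization effectively absorbs your constant $\lambda$ up front, which is why it can relegate the associate-family issue to a one-line remark afterward; your route is marginally more elementary (no square-root coordinate change needed) but obliges you to carry out the explicit phase-removal at the end, which you handle correctly.
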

 
 \begin{proof} For the simplicity, rotating the coordinate system in ${\mathbb{R}}^{3}$, we can take the normalization ${\mathbf{V}} =-{\mathbf{e}_{3}}= (0, 0, -1)$. We assume that the first Chern-Ricci function 
\begin{equation}  \label{first assumption}
 \ln \left[ \,  {\left( -  \mathcal{K}  \right) }^{ - \frac{1}{4}}   \left( 1+ \mathbf{N}_{\left(-{\mathbf{e}_{3}}\right)}   \right)  \, \right]  = C  
\end{equation} 
is constant. The key idea is to take the orthogonal lines of curvature on our minimal surface $\Sigma$ in order to read the 
information (\ref{first assumption}) in terms of the corresponding Gauss map. We first begin with an arbitrary local conformal 
coordinate $w$ on $\Sigma$ to find the conformal harmonic map $:$
\[
  {\mathbf{X}} =  {\mathbf{X}}(w) =   {\mathbf{X}}({w}_{0}) + \left(  \textrm{Re} \int_{{w}_{0}}^{w} \omega_{1},  \, \textrm{Re} \int_{{w}_{0}}^{w} \omega_{2},  \, \textrm{Re}  \int_{{w}_{0}}^{w}   \omega_{3} \right), 
\]
where the holomorphic $1$-forms $\left(\omega_{1}, \omega_{2}, \omega_{3} \right)$ are given by the Weierstrass data $\left(G(w), \Psi(w) dw\right):$ 
\[
\left(\omega_{1}, \omega_{2}, \omega_{3} \right)=  \left(  \frac{1}{2}  \left(1 -{G(w)}^2 \right)  \Psi(w)\, dw, \,  \frac{i}{2} \left(1 +{G(w)}^2 \right) \Psi(w)
\, dw, \,   {G(w)} \Psi(w) \, dw \right).
\]
We introduce the \textit{isothermic} coordinate $\zeta$ from the initial conformal coordinate $w$ by the rule
\[
  w \mapsto \zeta ={\zeta}_{0} + \int_{w_{0}}^{w} \sqrt{\, G'(w) \Psi(w)  \;} \, dw,
\]
and the mapping ${\mathbf{g}}(\zeta):=G(w)$. Then, the Enneper-Weierstrass representation becomes  
\begin{equation}  \label{in terms of isothermic 1}
  {\mathbf{X}} = {\mathbf{X}}(\zeta) =   {\mathbf{X}}\left({\zeta}_{0}\right) + \left(  \textrm{Re} \int_{{\zeta}_{0}}^{\zeta} \omega_{1},  \, \textrm{Re} \int_{{\zeta}_{0}}^{\zeta} \omega_{2},  \, \textrm{Re}  \int_{{\zeta}_{0}}^{\zeta}   \omega_{3} \right), 
\end{equation}
where the holomorphic $1$-forms $\left(\omega_{1}, \omega_{2}, \omega_{3} \right)$ are given by the Weierstrass data $\left({\mathbf{g}}(\zeta), \frac{1}{
{\mathbf{g}}'(\zeta)} d\zeta\right):$ 
\begin{equation}  \label{one forms 1}
\left(\omega_{1}, \omega_{2}, \omega_{3} \right)=  \left(  \frac{1}{2} \cdot \frac{1 -{({\mathbf{g}}(\zeta))}^2}{  {\mathbf{g}}'(\zeta)}    d\zeta, \,  
 \frac{i}{2} \cdot \frac{ 1 +{({\mathbf{g}}(\zeta))}^2 }{  {\mathbf{g}}'(\zeta)}    d\zeta, \,  \frac{{\mathbf{g}}(\zeta)}{{\mathbf{g}}'(\zeta)} \, d\zeta \right).
\end{equation}
The conformal coordinates $(u, v)=\left(\textrm{Re}\, \zeta, \textrm{Im}\, \zeta\right)$ give us the orthogonal lines of curvature on $\Sigma:$ 
\[
 0 =  \textrm{Im} \left( -  {\mathbf{g}}'(\zeta) \cdot \frac{1}{{\mathbf{g}}'(\zeta)} {d\zeta}^{2}   \right) 
 = \textrm{Im} \left( -  {d\zeta}^{2}   \right) =  - 2 \, du  \, dv.
\]
Using classical formulas (cf. \cite[Chapter 9]{Oss86}), we compute Gauss curvature and angle function:
\begin{equation}  \label{assumption in isothermic}
\mathcal{K}  =   - {\left( \frac{   2 \, \vert   {\mathbf{g}}'(\zeta)   \vert } { \, 1 + {\vert \, {\mathbf{g}}(\zeta) \, \vert}^{2} \, }\right)}^{4} 
\;\; \text{and} \;\;
\mathbf{N}_{\left(-{\mathbf{e}_{3}}\right)}    = \frac{ \, 1 - {\vert \, {\mathbf{g}}(\zeta) \, \vert}^{2} \, }{  \, 1 + {\vert \, {\mathbf{g}}(\zeta) 
\, \vert}^{2} \, }.
\end{equation}
Combining (\ref{first assumption}) and (\ref{assumption in isothermic}) gives 
\begin{equation} \label{first hol}
C = \ln \left[ \,  {\left( -  \mathcal{K}  \right) }^{ - \frac{1}{4}}   \left( 1+ \mathbf{N}_{\left(-{\mathbf{e}_{3}}\right)}   \right)  \, \right]  = 
\ln \left[ \,  \frac{1 + {\vert \, {\mathbf{g}}(\zeta) \, \vert}^{2} }{   2 \, \vert   {\mathbf{g}}'(\zeta)  \vert  } \cdot \frac{2}{ 1 + {\vert \, {\mathbf{g}}(\zeta) 
\, \vert}^{2}} \, \right]=    - \textrm{Re}\, \left[\, \log \, {\mathbf{g}}'(\zeta) \, \right].
\end{equation}
By the holomorphicity of the Gauss map ${\mathbf{g}}(\zeta)$, we have ${\mathbf{g}}(\zeta) = \alpha \left( \zeta - {\zeta}_{0} \right)$. Plugging this into
(\ref{one forms 1}), the Enneper-Weierstrass representation (\ref{in terms of isothermic 1}) shows that $\Sigma$ is Enneper's surface.
 \end{proof}
           
  \begin{remark}
  The associate family deformation of Enneper's surface induces rotations in ${\mathbb{R}}^{3}$.
  \end{remark}
           
 \begin{theorem}[\textbf{Classification of minimal surfaces with constant second Chern-Ricci function}]   \label{second classification}
 Let  $\Sigma$ be a minimal surface in ${\mathbb{R}}^{3}$ with the Gauss curvature $\mathcal{K}={\mathcal{K}}_{ {\mathbf{g}}_{{}_{\Sigma}}}<0$ and
unit normal vector $\mathbf{N}$. Suppose that there exists a constant unit vector field ${\mathbf{V}}$ in ${\mathbb{R}}^{3}$ such that $-1<\mathbf{N}_{\mathbf{V}}< 1$ and that the second Chern-Ricci function ${\textrm{CR}}^{2}_{\mathbf{V}} =\ln \left(  { \left( -  \mathcal{K}  \right) }^{ - \frac{1}{2}}   \left(  1 -  { \mathbf{N}_{\mathbf{V}} }^2 \right)    \right)$ is constant. Then, $\Sigma$ is a part of the minimal surface given by
the Enneper-Weierstrass representation    
\begin{equation}  \label{in terms of isothermic 2}
{\mathbf{X}}(\zeta) =   {\mathbf{X}}\left({\zeta}_{0}\right) + \left(  \textrm{Re} \int_{{\zeta}_{0}}^{\zeta} \omega_{1},  \, \textrm{Re} \int_{{\zeta}_{0}}^{\zeta} \omega_{2},  \, \textrm{Re}  \int_{{\zeta}_{0}}^{\zeta}   \omega_{3} \right)
\end{equation}
where the holomorphic $1$-forms 
\begin{equation}  \label{one forms 2}
\left(\omega_{1}, \omega_{2}, \omega_{3} \right)=  \left(  \frac{1}{2} \cdot \frac{1 -{({\mathbf{g}}(\zeta))}^2}{  {\mathbf{g}}'(\zeta)}    d\zeta, \,  
 \frac{i}{2} \cdot \frac{ 1 +{({\mathbf{g}}(\zeta))}^2 }{  {\mathbf{g}}'(\zeta)}    d\zeta, \,  \frac{{\mathbf{g}}(\zeta)}{{\mathbf{g}}'(\zeta)} \, d\zeta \right).
\end{equation}
are given by the Weierstrass data 
$\left({\mathbf{g}}(\zeta), \frac{1}{{\mathbf{g}}'(\zeta)} d\zeta\right)=\left( e^{\alpha \left(\zeta - {\zeta}_{0} \right)},  e^{-\alpha \zeta}  d \zeta \right)$
for some constants $\alpha \in {\mathbb{C}}-\{0\}$ and ${\zeta}_{0} \in {\mathbb{C}}$.
\end{theorem}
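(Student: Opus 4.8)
The plan is to run the very same isothermic-coordinate scheme as in the proof of Theorem \ref{enneper is unique} and to reduce the constancy hypothesis to a first-order holomorphic ODE for the Gauss map. Since the second Chern-Ricci function depends on $\mathbf{V}$ only through ${\mathbf{N}_{\mathbf{V}}}^{2}$ and $\mathbf{N}_{(-\mathbf{V})} = -\mathbf{N}_{\mathbf{V}}$, I may first rotate the coordinate system of ${\mathbb{R}}^{3}$ so that $\mathbf{V} = -{\mathbf{e}_{3}}$. Starting from an arbitrary local conformal coordinate $w$, I pass to the isothermic coordinate $\zeta$ and the rescaled Gauss map ${\mathbf{g}}(\zeta)$ exactly as before, so that $\Sigma$ is represented by (\ref{in terms of isothermic 2})--(\ref{one forms 2}) with Weierstrass data $\left({\mathbf{g}}(\zeta), \frac{1}{{\mathbf{g}}'(\zeta)} d\zeta\right)$, and the curvature and angle formulas (\ref{assumption in isothermic}) hold.

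Next I would translate the hypothesis into analytic form. Substituting (\ref{assumption in isothermic}) and simplifying the algebraic factor
\[
{\left( -\mathcal{K} \right)}^{-\frac{1}{2}} \left( 1 - {\mathbf{N}_{(-{\mathbf{e}_{3}})}}^{2} \right)
= \frac{{\left( 1 + {\vert {\mathbf{g}} \vert}^{2} \right)}^{2}}{4\,{\vert {\mathbf{g}}' \vert}^{2}} \cdot \frac{4\,{\vert {\mathbf{g}} \vert}^{2}}{{\left( 1 + {\vert {\mathbf{g}} \vert}^{2} \right)}^{2}}
= {\left\vert \frac{{\mathbf{g}}(\zeta)}{{\mathbf{g}}'(\zeta)} \right\vert}^{2},
\]
the assumption that ${\textrm{CR}}^{2}_{\mathbf{V}}$ equals a constant $C$ becomes $C = 2\,\textrm{Re}\left[ \log \frac{{\mathbf{g}}(\zeta)}{{\mathbf{g}}'(\zeta)} \right]$; that is, the holomorphic function ${\mathbf{g}}/{\mathbf{g}}'$ has constant modulus ${\vert {\mathbf{g}}/{\mathbf{g}}' \vert} = e^{C/2}$. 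The same reduction can alternatively be read off from the additive relation ${\textrm{CR}}^{2}_{\mathbf{V}} = {\textrm{CR}}^{1}_{\mathbf{V}} + {\textrm{CR}}^{1}_{(-\mathbf{V})}$ established in Section 2 together with the identity (\ref{first hol}) and its analogue for the opposite vector.

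The decisive step is then a Liouville-type rigidity argument. The angle hypothesis $-1 < \mathbf{N}_{\mathbf{V}} < 1$ forces $0 < {\vert {\mathbf{g}} \vert}^{2} < \infty$, so ${\mathbf{g}}$ is holomorphic and nowhere zero, while $\mathcal{K} < 0$ forces ${\mathbf{g}}' \neq 0$; hence ${\mathbf{g}}/{\mathbf{g}}'$ is a zero-free holomorphic function. A nonconstant holomorphic function is open, so it cannot take all of its values on a single circle $\{ {\vert z \vert} = e^{C/2} \}$; therefore ${\mathbf{g}}/{\mathbf{g}}' \equiv 1/\alpha$ for some constant $\alpha \in {\mathbb{C}} - \{0\}$. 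I expect this to be the only genuine subtlety: phrasing the argument through constant modulus rather than through $\textrm{Re}\,\log$ sidesteps the multivaluedness of the logarithm, and the two observations ${\mathbf{g}} \neq 0$, ${\mathbf{g}}' \neq 0$ are exactly what guarantee that ${\mathbf{g}}/{\mathbf{g}}'$ is a \emph{globally} defined, zero-free holomorphic function to which the open-mapping principle applies.

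Finally I would integrate the resulting linear ODE ${\mathbf{g}}'(\zeta) = \alpha\,{\mathbf{g}}(\zeta)$ to get ${\mathbf{g}}(\zeta) = e^{\alpha(\zeta - {\zeta}_{0})}$ for a suitable ${\zeta}_{0} \in {\mathbb{C}}$, so that $\frac{1}{{\mathbf{g}}'(\zeta)} d\zeta$ is a constant multiple of $e^{-\alpha\zeta}\,d\zeta$, i.e. the stated Weierstrass data up to a homothety and an associate-family rotation. Substituting back into (\ref{one forms 2}) and (\ref{in terms of isothermic 2}) exhibits $\Sigma$ as the asserted minimal surface; the remaining computations are the same routine Weierstrass bookkeeping as in Theorem \ref{enneper is unique}.
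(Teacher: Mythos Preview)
Your proposal is correct and follows essentially the same route as the paper: normalize $\mathbf{V}$, pass to isothermic coordinates, compute ${\textrm{CR}}^{2}_{\mathbf{V}}$ as $2\ln\lvert {\mathbf{g}}/{\mathbf{g}}'\rvert$, and conclude from holomorphicity that ${\mathbf{g}}'/{\mathbf{g}}$ is a nonzero constant, hence ${\mathbf{g}}(\zeta)=e^{\alpha(\zeta-\zeta_{0})}$. Your explicit checks that ${\mathbf{g}}\neq 0$ and ${\mathbf{g}}'\neq 0$ and your invocation of the open mapping theorem are a bit more detailed than the paper's terse ``by the holomorphicity of ${\mathbf{g}}(\zeta)$,'' but the argument is the same.
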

 
  \begin{proof} For the simplicity, rotating the coordinate system in ${\mathbb{R}}^{3}$, we can take the normalization ${\mathbf{V}} ={\mathbf{e}_{3}}= (0, 0, 1)$. As in the proof of Theorem \ref{enneper is unique}, taking the orthogonal lines of curvature coordinates $\zeta$ on the minimal surface under the normalization of Hopf differential $-{d{\zeta}}^{2}$, we can write the second Chern-Ricci function in terms of the corresponding Gauss map 
${\mathbf{g}}(\zeta)$: 
\begin{center}
${\textrm{CR}}^{2}_{{\mathbf{e}_{3}}} = 
 \ln \left(  { \left( -  \mathcal{K}  \right) }^{ - \frac{1}{2}}   \left(  1 -  { \mathbf{N}_{ {\mathbf{e}_{3}} } }^2 \right)    \right)
 = \ln \left[ \, {\left( \frac{1 + {\vert \, {\mathbf{g}}(\zeta) \, \vert}^{2} }{   2 \, \vert   {\mathbf{g}}'(\zeta)  \vert  } \right)}^{2} 
{\left( \frac{2 \vert   {\mathbf{g}} (\zeta)  \vert}{ 1 + {\vert \, {\mathbf{g}}(\zeta) \, \vert}^{2}}  \right)}^{2} \, \right]=   
 2 \, \ln \left( \frac{ \vert {\mathbf{g}}(\zeta) \vert }{ \vert {\mathbf{g}}'(\zeta) \vert } \right),$
\end{center}
or equivalently,
\begin{equation} \label{second hol}
 {\textrm{CR}}^{2}_{{\mathbf{e}_{3}}}  = - 2 \, \textrm{Re}\, \left[ \, \log \,  \frac{{\mathbf{g}}'(\zeta)}{{\mathbf{g}}(\zeta)} \, \right] 
 = - 2 \, \textrm{Re}\, \left[ \, \log \,  {\left(\,  \log  \,   {\mathbf{g}}(\zeta) \,   \right)}' \, \right]. 
\end{equation}
Since the function ${\textrm{CR}}^{2}_{{\mathbf{e}_{3}}}$  is constant, by the holomorphicity of ${\mathbf{g}}(\zeta)$, we have the 
Weierstrass data 
\[
\left({\mathbf{g}}(\zeta), \frac{1}{{\mathbf{g}}'(\zeta)} d\zeta\right)=\left( e^{\alpha \left(\zeta -{\zeta}_{0} \right)},  e^{-\alpha \zeta}  d \zeta \right).
\]  
\end{proof}

\begin{remark}[\textbf{Examples of minimal surfaces with constant second Chern-Ricci function}]
The moduli space in Theorem \ref{second classification} contains catenoids \cite[Section 2.1.2]{BLN2004} and helicoids  \cite[Section 2.1.4]{BLN2004}. In fact, under the transformation 
$\zeta \mapsto z :=  e^{\alpha \left(\zeta - {\zeta}_{0} \right)}$, we obtain the Weierstrass data 
\begin{center}
  $
\left({\mathbf{g}}(\zeta), \frac{1}{{\mathbf{g}}'(\zeta)} d\zeta\right)=\left( e^{\alpha \left(\zeta - {\zeta}_{0} \right)},  e^{-\alpha \zeta}  d \zeta \right)
= \left(z,  \frac{e^{ - \alpha   {\zeta}_{0}}}{\alpha} \cdot \frac{1}{z^2} dz \right)$, 
\end{center}
which recovers the associate family of helicoids. 
\end{remark}

\section*{Appendix. Four holomorphic quadratic differentials on minimal surfaces in ${\mathbb{R}}^{3}$}
  
We summarize definitions of holomorphic quadratic differentials on minimal surfaces in ${\mathbb{R}}^{3}$.
 Throughout this section, as in the proofs of Theorem \ref{enneper is unique} and \ref{second classification}, we use the orthogonal lines of curvature 
 coordinates $\zeta$ on the negatively curved minimal surface $\Sigma$ with the normalization of Hopf's holomorphic 
 differential $  {\mathcal{Q}} = - \frac{1}{2} {d{\zeta}}^{2}$.  The minimal surface $\Sigma$ is parameterized by     
\begin{center}
  ${\mathbf{X}}(\zeta) =   {\mathbf{X}}\left({\zeta}_{0}\right) + \left(  \textrm{Re} \int_{{\zeta}_{0}}^{\zeta} \omega_{1},  \, \textrm{Re} \int_{{\zeta}_{0}}^{\zeta} \omega_{2},  \, \textrm{Re}  \int_{{\zeta}_{0}}^{\zeta}   \omega_{3} \right)$, 
\end{center}
where we have the holomorphic $1$-forms
$\left(\omega_{1}, \omega_{2}, \omega_{3} \right)=  \left(  \frac{1}{2} \cdot \frac{1 -{({\mathbf{g}}(\zeta))}^2}{  {\mathbf{g}}'(\zeta)} , \,  
 \frac{i}{2} \cdot \frac{ 1 +{({\mathbf{g}}(\zeta))}^2 }{  {\mathbf{g}}'(\zeta)}  , \,  \frac{{\mathbf{g}}(\zeta)}{{\mathbf{g}}'(\zeta)} \,  \right)   d\zeta$.
 
\subsection*{I. Bernstein-Mettler's entropy differential \cite{BM2013}}
Applying the variational structure of Ricci's intrinsic condition induced from Hamilton's
  entropy functional for Ricci flow, Bernstein and Mettler constructed the entropy  differential. In \cite[Corollary A.2]{BM2013},
they observed that, if the minimal surface $\Sigma$ is an Enneper surface, the conformally changed metric 
${\left( -  {\mathcal{K}}_{{}_{\textrm{\textbf{Enn}}}} \right)}^{\frac{3}{4}}  {\mathbf{g}}_{{}_{\textrm{\textbf{Enn}}}}$ recovers the positively curved cigar soliton, and also proved that this property characterizes Enneper surfaces among minimal surfaces in ${\mathbb{R}}^{3}$.  
  The formula in \cite[Proposition 3.2]{BM2013} implies that the Schwarzian derivative of the  Gauss map  ${\mathbf{g}}(\zeta)$ realizes the holomorphic quadratic differential
\begin{equation} \label{holomorphic differential 3}
   \mathbf{S}\mathbf{g} (\zeta) \, d{\zeta}^2 := \left[   {\left( \frac{  {\mathbf{g}}''(\zeta)  }{   {\mathbf{g}}'(\zeta)} \right)}'   
  - \frac{1}{2} {\left(  {\frac{  {\mathbf{g}}''(\zeta)  }{   {\mathbf{g}}'(\zeta)} } \right)}^{2}   \,  \right]   d{\zeta}^{2}. 
\end{equation}
We would like to add that the Schwarzian derivative of the gauss map realizes the squared 
complex curvature of the lifted holomorphic null curve from the minimal surface \cite[Section 3]{Go1999}. 
 
\subsection*{II. Induced holomorphic quadratic differential from Chern-Ricci functions}
By the identity (\ref{first hol}), the holomorphicity of the corresponding Gauss map  ${\mathbf{g}}(\zeta)$ also implies the harmonicity of
  the first Chern-Ricci function
\[
{\textrm{CR}}^{1}_{\left(-{\mathbf{e}_{3}}\right)} =\ln \left[ \,  {\left( -  \mathcal{K}  \right) }^{ - \frac{1}{4}}   \left( 1+ \mathbf{N}_{\left(-{\mathbf{e}_{3}}\right)}   \right)  \, \right]    =    - \textrm{Re}\, \left[\, \log \, {\mathbf{g}}'(\zeta)\, \right].
\]
The harmonic function $-{\textrm{CR}}^{1}_{\left(-{\mathbf{e}_{3}}\right)}$ induces the holomorphic quadratic differential
\begin{equation} \label{holomorphic differential 1}
  {\mathcal{Q}}_{1}  =  \left( \log \, {\mathbf{g}}'(\zeta) \right)'' d{\zeta}^2  =  {\left( \frac{  {\mathbf{g}}''(\zeta)  }{   {\mathbf{g}}'(\zeta)} \right)}'    d{\zeta}^{2}. 
\end{equation}
By the identity (\ref{second hol}),  the holomorphicity of the Gauss map  ${\mathbf{g}}(\zeta)$ also implies the harmonicity of
  the second Chern-Ricci function
\[
{\textrm{CR}}^{2}_{{\mathbf{e}_{3}}}  = \ln \left(  { \left( -  \mathcal{K}  \right) }^{ - \frac{1}{2}}   \left(  1 -  { \mathbf{N}_{ {\mathbf{e}_{3}} } }^2 \right)    \right)
 = - 2 \, \textrm{Re}\, \left[ \, \log \,  {\left(\,  \log  \,   {\mathbf{g}}(\zeta) \,   \right)}' \, \right]. 
 \]
 The harmonic function $- \frac{1}{2} {\textrm{CR}}^{2}_{{\mathbf{e}_{3}}}$ induces the holomorphic quadratic differential
\begin{equation} \label{holomorphic differential 2}
  {\mathcal{Q}}_{2}  =  \left( \log \,  {\left(\,  \log  \,   {\mathbf{g}}(\zeta) \,   \right)}'  \right)'' d{\zeta}^2  
 =  \left[   {\left( \frac{  {\mathbf{g}}''(\zeta)  }{   {\mathbf{g}}'(\zeta)} \right)}'   
  - {\left(  {\frac{  {\mathbf{g}}'(\zeta)  }{   {\mathbf{g}}(\zeta)} } \right)}'   \,  \right]   d{\zeta}^2. 
\end{equation}

\bigskip

\bigskip


\begin{thebibliography}{00} 

\bibitem{Bl1950}
W. Blaschke, \textit{Einfiihrung in die Differentialgeometrie}, Springer, Berlin, 1950.

\bibitem{BLN2004}
F. Brito, M. L. Leite, V. Neto, \textit{Liouville's formula under the viewpoint of minimal surfaces}, 
Commun. Pure Appl. Anal. \textbf{3} (2004), no. 1, 41--51.

\bibitem{BM2013}
 J. Bernstein,  T. Mettler, \textit{Characterizing classical minimal surfaces via a new meromorphic quadratic differential}, arXiv preprint,
\url{https://arxiv.org/abs/1301.1663v3}.

\bibitem{Chern1969}
S. S. Chern, \textit{Simple proofs of two theorems on minimal surfaces}, Enseign. Math. (2) \textbf{15} (1969), 53--61.

\bibitem{Go1999}
H. Gollek, \textit{Deformations of minimal surfaces of ${\mathbb{R}}^{3}$ containing planar geodesics}, 
The 18th Winter School Geometry and Physics (Srní, 1998), Rend. Circ. Mat. Palermo (2) Suppl. 
\textbf{59} (1999), 143--153.

\bibitem{Law1970}
H. B. Lawson, \textit{Complete minimal surfaces in ${\mathbb{S}}^{3}$}, Ann. of Math. (2), \textbf{92} (1970), 335--374.

\bibitem{Law1971} H. B. Lawson, \textit{Some intrinsic characterizations of minimal surfaces}, J. Analyse Math. \textbf{24} (1971), 151--161. 

\bibitem{MM2015} A. Moroianu, S.  Moroianu, \textit{Ricci surfaces}, Ann. Sc. Norm. Super. Pisa Cl. Sci. (5) \textbf{14} (2015), no. 4, 1093--1118. 

\bibitem{Oss86} R. Osserman, \textit{A survey of minimal surfaces}. Second edition. Dover Publications, Inc., New York, 1986. 


\bibitem{WebEnn}
M. Weber, \textit{The Enneper Surface}, \url{http://www.indiana.edu/~minimal/archive/Classical/Classical/Enneper/web/index.html}.

\end{thebibliography}
\end{document}